%

%

\documentclass[twoside,11pt]{amsart} \usepackage{epsfig}
\usepackage{latexsym} 
\usepackage{amsmath} \usepackage{amssymb}
\usepackage{cancel}

 \keywords{ primes, twin primes, gaps, prime constellations, Eratosthenes sieve,
arithmetic progression, CAPC, primorial numbers, Pascal matrix}

\subjclass{11N05, 11A41, 11A07}

\setlength{\parindent}{15pt}

 \setlength{\parskip}{1.5ex plus 0.5ex minus 0.3ex}
 \sloppy

\newtheorem{theorem}{Theorem}[section]

\newdimen\epsfxsize
\newdimen\epsfysize

\newcommand {\gap}     {\makebox[0.075 in]{}}

\newcommand {\fto}     {\longrightarrow}

\newcommand{\pml}[1] {{#1}^\#}

\newcommand{\Z}     {{\mathbb Z}}



\newcommand{\pgap}   {{\mathcal G}}

\begin{document}

\title{Constellations of gaps in Eratosthenes sieve}

\date{25 Feb 2015}

\author{Fred B. Holt}
\address{fbholt62@gmail.com ;  5520 - 31st Ave NE, Seattle, WA 98105}

\begin{abstract}
A few years ago we identified a recursion that works directly with the gaps among
the generators in each stage of Eratosthenes sieve.  This recursion provides explicit
enumerations of sequences of gaps among the generators, which sequences are known as 
constellations.  

Over the last year we identified a discrete linear system that exactly models
the population of any gap across all stages of the sieve.
In August 2014 we summarized our results from analyzing this discrete model on populations
of single gaps.  This paper extends the discrete system to model the populations of constellations
of gaps. 

The most remarkable result is a {\em strong Polignac result on arithmetic progressions}.
We had previously established that the equivalent of Polignac's conjecture holds for Eratosthenes 
sieve -- that every even number arises as a gap in the sieve, and its population
converges toward the ratio implied by Hardy and Littlewood's Conjecture B.
Extending that work to constellations, we 
here establish that for any even gap $g$, if $p$ is the maximum prime
such that $\pml{p} | g$ and $P$ is the next prime larger than $p$, 
then for every $2 \le j_1 < P-1$, the constellation $g,g,\ldots,g$
of length $j_1$ arises in Eratosthenes sieve.  This constellation corresponds to an
arithmetic progression of $j_1+1$ consecutive candidate primes.

\end{abstract}

\maketitle

\section{Introduction}
This paper is a technical note that builds on our previous paper, 
{\em Eratosthenes sieve and the gaps between primes} \cite{HRErat}.
As such, we will assume familiarity with the notations, models,
methods, and results of that work.

A {\em constellation among primes} \cite{Riesel} is a sequence of consecutive gaps
between prime numbers.  Let $s=a_1 a_2 \cdots a_k$ be a sequence of $k$
numbers.  Then $s$ is a constellation among primes if there exists a sequence of
$k+1$ consecutive prime numbers $p_i p_{i+1} \cdots p_{i+k}$ such
that for each $j=1,\ldots,k$, we have the gap $p_{i+j}-p_{i+j-1}=a_j$.  

We do not study the gaps between primes directly.  Instead, we study the cycle of gaps
$\pgap(\pml{p})$ at each stage of Eratosthenes sieve.  Here, $\pml{p}$ is the
{\em primorial} of $p$, which is the product of all primes from $2$ up to and including $p$.
$\pgap(\pml{p})$ is the cycle of gaps among the generators of $\Z \bmod \pml{p}$.
These generators and their images through the counting numbers are the candidate primes
after Eratosthenes sieve has run through the stages from $2$ to $p$.  All of the remaining primes
are among these candidates, but many of the candidates do not survive further stages of the sieve 
to be confirmed as primes.

At times we will speak of constellations {\em corresponding} to gaps between primes.
We do this for intuitive convenience, to motivate why the particular constellations we
study may be of interest.  The results here are for populations within Eratosthenes sieve,
and the constellations correspond to primes only to the extent that they survive the
sieve.

In \cite{HRErat} we identified a discrete dynamic system that exactly models the populations
of gaps from the cycle of gaps $\pgap(\pml{p_k})$ in one stage of Eratosthenes sieve to 
the next, $\pgap(\pml{p_{k+1}})$.  We use the eigenstructure of this system to obtain
asymptotic ratios between the populations of various gaps in the sieve.

With the following theorem, we generalize the discrete dynamic system for gaps to 
model the populations of constellations across stages of Eratosthenes sieve.

\begin{theorem}\label{sThm}
Let $s$ be a constellation of length $j_1$ and of sum $|s|$.
Let $p_0$ be a prime such that $|s| < 2p_1$.

Let $n_j(s,\pml{p})$ be the population of driving terms of length $j$ for $s$ in $\pgap(\pml{p})$,
with $j_1 \le j \le J$ and $p \ge p_0$.  Then
\begin{eqnarray}\label{j1JEq}
n_s(\pml{p_k}) & = & \left. M_{j_1:J}\right|_{p_k} \cdot n_s(\pml{p_{k-1}}) \\
 & = & R \cdot \left. \Lambda_{j_1:J} \right|_{p_k} \cdot L \cdot n_s(\pml{p_{k-1}}). \notag
\end{eqnarray}
in which $L$ is the upper triangular Pascal matrix, $R$ is the upper triangular Pascal
matrix with alternating signs $(-1)^{i+j}$, and $\Lambda$ is the diagonal matrix of
eigenvalues $(p_k-j-1)$ for $j_1 \le j \le J$.
\end{theorem}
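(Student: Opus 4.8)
The plan is to set up a recursion for constellation populations that parallels the single-gap recursion from the prior paper, then diagonalize the resulting transition operator using the Pascal matrix identity $M = R \Lambda L$.

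First I would make precise the notion of a \emph{driving term of length $j$} for a constellation $s$. In the single-gap theory of \cite{HRErat}, a gap $g$ in $\pgap(\pml{p_k})$ is carried to the next stage by the closure operation that removes the generators divisible by $p_{k+1}$; a gap persists, or is fused with neighbors, depending on how the removed generator lands relative to it. The natural generalization is that a constellation $s$ of length $j_1$ in $\pgap(\pml{p_{k+1}})$ arises from a longer ``driving'' sub-constellation of length $j \ge j_1$ in $\pgap(\pml{p_k})$, one that collapses to $s$ after exactly one generator in the appropriate interior position is removed at stage $k+1$. So I would define $n_j(s,\pml{p_k})$ as the count of length-$j$ driving terms and assemble them into the vector $n_s(\pml{p_k})$ indexed by $j_1 \le j \le J$.

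The core step is to derive the entrywise form of the transition matrix $\MJP{j_1:J}{p_k}$. For each stage, I would track how a driving term of length $j$ at stage $k-1$ contributes to driving terms of various lengths at stage $k$: when $p_k$ sweeps through, each of the $j+1$ generators spanning the length-$j$ term can be the one removed, and removing an interior generator shortens the term while removing a boundary generator merges it with adjacent material. Counting these cases should reproduce the binomial structure: a length-$j$ term at the earlier stage is replicated $p_k$ times in $\Z \bmod \pml{p_k}$, and the survivors distribute across lengths according to binomial coefficients. The hypothesis $|s| < 2p_1$ (and $p \ge p_0$) is what guarantees that no two removed generators interfere within a single driving term, so that the transitions stay linear and the combinatorics reduce to a clean count per term. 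I would verify that the resulting matrix has entries matching the product $R \AJP{j_1:J}{p_k} L$, where $L$ is the Pascal matrix of binomials $\binom{\cdot}{\cdot}$, $R$ its signed inverse, and $\AJP{j_1:J}{p_k}$ the diagonal of eigenvalues $p_k - j - 1$.

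The diagonalization itself then rests on the known factorization: the lower-shifted difference operator that governs how populations propagate has the Pascal matrix $L$ as its eigenvector matrix, with $R = L^{-1}$ the inverse Pascal matrix carrying the alternating signs $(-1)^{i+j}$. Once $\MJP{j_1:J}{p_k}$ is written in the raw counting basis, I would conjugate by $L$ and check that the result is diagonal with the claimed entries $p_k - j - 1$; this is a finite identity among binomial sums (the Vandermonde-type collapse $\sum_i (-1)^{i} \binom{\cdot}{i}\binom{i}{\cdot} = \delta$) and is essentially bookkeeping once the matrix entries are correct.

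The main obstacle I anticipate is establishing the combinatorial transition rule cleanly, namely proving that the count of how a length-$j$ driving term splits into shorter driving terms is \emph{exactly} binomial and that the eigenvalue is precisely $p_k - j - 1$ rather than some perturbed value. This requires carefully handling the boundary generators of each driving term and confirming that the condition $|s| < 2p_1$ rules out the degenerate cases where a single removal collapses the term in more than one place. The single-gap case in \cite{HRErat} is the base of an induction on $j_1$, and I expect the inductive passage from length $j_1$ to $j_1 + 1$ to be where the real content lies; the matrix algebra that produces the stated eigenstructure should then follow formally.
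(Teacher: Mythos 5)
Your overall architecture (a linear per-stage recursion followed by the Pascal-matrix diagonalization $M = R\,\Lambda\, L$ with $R = L^{-1}$) matches the paper's, and your reading of the hypothesis $|s| < 2p_1$ --- that it forces the relevant closures to land in distinct copies of each driving term --- is exactly right. But the combinatorial transition rule, which you yourself flag as the crux, is wrong in your sketch, and the error is precisely the new idea that the constellation case requires. A driving term for $s = g_1, g_2, \ldots, g_{j_1}$ is not just ``a longer sub-constellation that collapses to $s$ after one removal''; it carries a block structure $\tilde{s} = \tilde{s_1}\,\tilde{s_2}\cdots \tilde{s_{j_1}}$ with $|\tilde{s_i}| = g_i$, and the dichotomy that determines the matrix is not interior-to-the-term versus the two ends of the term (the single-gap picture you describe), but interior-to-a-block versus block boundary. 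There are $j_1+1$ \emph{boundary} closures --- including the $j_1 - 1$ generators at internal block junctions, which are interior to the term --- and any one of them removes the copy as a driving term: for $s = 2,10,2$ and driving term $2,6,4,2$, closing the generator between the $2$ and the $6$ gives $8,4,2$, which no longer drives $s$, even though that generator is interior to the term. Only the $j - j_1$ closures strictly inside blocks shorten the term, each by exactly one gap. Consequently the transition matrix is bidiagonal --- diagonal entries $p_k - j - 1$, superdiagonal entries $j - j_1$ --- and the survivors emphatically do not ``distribute across lengths according to binomial coefficients,'' as you assert twice: a length-$j$ term feeds only lengths $j$ and $j-1$ in a single stage. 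The binomial coefficients live solely in the eigenvector matrices $L$ and $R$, and the algebraic half of the proof is checking that $R \cdot \mathrm{diag}(p_k - j_1 - 1, \ldots, p_k - J - 1) \cdot L$ reproduces that bidiagonal matrix. Had you followed your own counting rule (two eliminating closures, $j-1$ shortening ones), you would obtain superdiagonal $j-1$ instead of $j - j_1$, the row sums would be wrong, and the claimed factorization would fail.

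Relatedly, your closing plan --- induction on $j_1$ with the single-gap case of \cite{HRErat} as base --- is not what the paper does and has no evident inductive step: driving terms for a constellation of length $j_1 + 1$ are not assembled from driving terms for one of length $j_1$ in any way your outline exploits. The paper's proof is direct: fix a driving term $\tilde{s}$ of length $j$, note that step R2 of the recursion creates $p_k$ copies of it, that the $j+1$ closures of step R3 affecting it fall in distinct copies (by $|s| < 2p_k$), classify those closures as $j_1 + 1$ boundary versus $j - j_1$ interior, and read off the bidiagonal matrix; the stated eigenstructure then follows from the purely algebraic, constellation-independent Pascal factorization. Once you replace your transition rule with the block-based one, your outline collapses to the paper's argument and no induction is needed.
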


As with our work on gaps, this explicit enumeration has two limitations.  The first limitation
is that all of the populations grow super exponentially by factors of $(p-j_1-1)$.  To address this,
we normalize the dynamic system by dividing by factors of $(p-j_1-1)$, which makes the
dominant eigenvalue equal to $1$.

The second limitation of Theorem~\ref{sThm} is the condition $|s| < 2p_1$.  Because of
this condition, to exactly model the population of a constellation we have to calculate its
initial conditions in $\pgap(\pml{p})$, for $p$ very close to $|s|$.  But $\pgap(\pml{p})$ 
 consists of $\phi(\pml{p})$ gaps, which becomes unwieldy around $p=37$:

\begin{center}
\begin{tabular}{rr} \hline
\multicolumn{1}{c}{$p$} & \multicolumn{1}{c}{length of $\pgap(\pml{p})$} \\ \hline
$31$ &  $3.06561 \; E10$ \\
$37$ &  $1.10362 \; E12$ \\
$41$ &  $4.41448 \; E13$ \\
$43$ &  $1.85408 \; E15$ \\ \hline
\end{tabular}
\end{center}

The eigenstructure of the dynamic system in Equation~(\ref{j1JEq}) enables us to extract
the asymptotic behavior of the population for constellations, if we can derive the initial conditions
for a cycle of gaps $\pgap(\pml{p})$ under much weaker conditions than those of 
Theorem~\ref{sThm}.

Extending the analogue of Polignac's conjecture for Eratosthenes sieve \cite{HRErat},
we are able to establish a strong Polignac result for arithmetic progressions:  that for
any $g=2n$, every feasible repetition $s=g,\ldots,g$ occurs in Eratosthenes sieve.
To the extent that these repetitions survive the sieve, they correspond to consecutive
primes in arithmetic progression with gap $g$.

\section{Modeling the populations of constellations}

\subsection{Summarizing the model for gaps}
In \cite{HRsmall} we developed a dynamic system that exactly models the populations of gaps
through the stages of Eratosthenes sieve.  We restate the model briefly here.

Let $g$ be a gap (a constellation of length $1$) with driving terms up to length $J$.
Let $p_0$ be a prime such that $g < 2p_1$.
Then
\begin{eqnarray*}
n_g (\pml{p_k}) & = & \left. M \right|_{p_k} \cdot n_g(\pml{p_{k-1}}) \\
 & = & \left[ \begin{array}{ccccc}
 p_k-2 & 1 & 0 & \cdots & 0 \\
 0 & p_k-3 & 2 & \cdots & 0 \\
 \vdots & & \ddots & \ddots & \vdots \\
 0 & \cdots & & & J-1 \\
 0 & \cdots & & & p_k-J-1
 \end{array} \right] \cdot n_g(\pml{p_{k-1}})  \\
 & & \\
 & = & R  \cdot \Lambda(p_k) \cdot L \cdot n_g(\pml{p_{k-1}})
\end{eqnarray*}
with 
\[ \Lambda(p_k) =  {\rm diag} \left[ p_k-2, \; p_k-3, \ldots, p_k-J-1 \right].
\]

\noindent Since the eigenvectors do not depend on the prime $p$, the dynamic system
can be expressed in terms of the initial conditions.
\begin{eqnarray}\label{ngEq}
n_g (\pml{p_k}) & = & \left. M \right|_{p_k} \cdots \left. M \right|_{p_1} \cdot n_g(\pml{p_0}) \\
 & = & R \cdot \Lambda^k  \cdot L \cdot n_g(\pml{p_0}) \nonumber
\end{eqnarray}
with
\[ \Lambda^k = {\rm diag} \left[ \prod_1^k (p_i-2), \;  
\prod_1^k (p_i-3), \ldots, \prod_1^k (p_i-J-1) \right].
\]

\noindent Dividing by $p-2$ at each iteration, we obtain the normalized population model
for gaps
\begin{eqnarray}\label{wgEq}
w_g (\pml{p_k}) & = & \left. M \right|_{p_k} \cdots \left. M \right|_{p_1} \cdot w_g(\pml{p_0}) \\
 & = & R \cdot \Lambda^k \cdot L \cdot w_g(\pml{p_0}) \nonumber 
\end{eqnarray}
and here
\[ \Lambda^k = {\rm diag} \left[ 1, \prod_1^k \frac{p_i-3}{p_i-2}, \; 
 \prod_1^k \frac{p_i-4}{p_i-2}, \ldots, \prod_1^k \frac{p_i-J-1}{p_i-2} \right].
 \]

The matrix $L$ is an upper triangular Pascal matrix, whose entries are the
binomial coefficients, and $R$ is an upper triangular matrix whose entries have
alternating signs.
\[
\begin{array}{lcl}
R_{ij} = \left\{ \begin{array}{crl}
(-1)^{i+j} \binom{j-1}{i-1} & {\rm if} & i \le j \\
 & & \\
 0 & {\rm if} & i > j \end{array} \right.
 & {\rm and} &
L_{ij} = \left\{ \begin{array}{crl}
 \binom{j-1}{i-1} & {\rm if} & i \le j \\
 & & \\
 0 & {\rm if} & i > j \end{array} \right.
\end{array}
\]

This dynamic system encompasses constellations as driving terms for gaps.  To adapt
the system to focus directly on the population of a constellation and its driving terms, we only
have to shift the eigenvalues.

\subsection{Extending the model to constellations}

The model for gaps relies on keeping track of the internal closures and external closures
for the driving terms for gaps.  For constellations, we need a richer concept than external closures,
and so we define the {\em boundary closures} for a driving term for a constellation.

Let $s$ be a constellation of length $j_1$, $s=g_1,g_2,\ldots,g_{j_1}$.
A driving term $\tilde{s}$ for $s$ will have the form $\tilde{s} = \tilde{s_1} \; \tilde{s_2} \ldots\tilde{s_{j_1}}$
in which $|\tilde{s_i}|=g_i$ for each $i$.

We call the
closures within an $\tilde{s_i}$ the {\em interior closures} for $\tilde{s}$, and the
exterior closures for each $\tilde{s_i}$ are the {\em boundary closures} for $\tilde{s}$.
Interior closures preserve the copy of $\tilde{s}$ as a driving term for $s$ but of
shorter length.  The $j_1+1$ boundary closures remove the copy of $\tilde{s}$
from being a driving term for $s$.

We are now ready to prove Theorem~\ref{sThm}.

\begin{proof} {\em of Theorem~\ref{sThm}:}
Let $s$ be a constellation of length $j_1$ and of sum $|s|$.
Let $p_0$ be a prime such that $|s| < 2p_1$.

Let $n_{s,j}(\pml{p})$ be the population of driving terms of length $j$ for $s$ in $\pgap(\pml{p})$,
with $j_1 \le j \le J$ and $p \ge p_0$.

Consider the recursion from $\pgap(\pml{p_{k-1}})$ to $\pgap(\pml{p_k})$, with $k \ge 1$.
Let $\tilde{s}$ be a particular occurrence of a driving term of length $j$ in $\pgap(\pml{p_{k-1}})$.
Since $|s| < 2p_k$, for the $p_k$ copies of $\tilde{s}$ initially created during step R2
of the recursion, 
the $j+1$ closures of step R3 all occur in different copies.  

Of the $p_k$ initial copies of $\tilde{s}$, the boundary closures eliminate $j_1+1$ copies
as driving terms for $s$.
The $j-j_1$ interior closures produce $j-j_1$ driving terms of length $j-1$, and 
$p_k-j-1$ copies of $\tilde{s}$ survive intact as driving terms of length $j$.

We can express this in the dynamic system:
\begin{eqnarray*}
n_s (\pml{p_k}) & = & M_{j_1:J}(p_k) \cdot n_s(\pml{p_{k-1}}) \\
 & = & \left[ \begin{array}{ccccc}
 p_k-j_1-1 & 1 & 0 & \cdots & 0 \\
 0 & p_k-j_1-2 & 2 & \cdots & 0 \\
 \vdots & & \ddots & \ddots & \vdots \\
 0 & \cdots & & & J-j_1 \\
 0 & \cdots & & & p_k-J-1
 \end{array} \right] \cdot n_g(\pml{p_{k-1}})  
 \end{eqnarray*}
Note that $M_{j_1:J}(p)$ is a $(J-j_1+1)\times(J-j_1+1)$ matrix.

The matrix $M_{j_1:J}(p)$ has the same eigenvectors as the matrix for gaps (of size
$J-j_1+1$).  The eigenstructure for $M_{j_1:J}(p)$ is given by
\[  M_{j_1:J}(p) = R \cdot \Lambda_{j_1:J}(p) \cdot L
\]
in which $R$ is still the upper triangular Pascal matrix with alternating signs, and $L$
is the upper triangular Pascal matrix.  These do not depend on the prime $p$. 
The eigenvalues are 
\[ \Lambda_{j_1:J}(p) =  {\rm diag} \left[ p-j_1-1, \; p-j_1-2, \ldots, p-J-1 \right].
\]

\noindent Since the eigenvectors do not depend on the prime $p$, the dynamic system
can easily be expressed in terms of the initial conditions.
\begin{eqnarray}\label{nsEq}
n_s (\pml{p_k}) & = & \left. M_{j_1:J} \right|_{p_k} \cdots \left. M \right|_{p_1} \cdot n_s(\pml{p_0}) \\
 & = & R \cdot \Lambda_{j_1:J}^k  \cdot L \cdot n_s(\pml{p_0}) \nonumber
\end{eqnarray}
with
\[ \Lambda_{j_1:J}^k = {\rm diag} \left[ \prod_1^k (p_i-j_1-1), \gap  
\prod_1^k (p_i-j_1-2), \ldots, \gap \prod_1^k (p_i-J-1) \right].
\]
\end{proof}

\subsection{Normalizing the populations}
In the large, the population of a constellation of length $j_1$ grows primarily by a factor of $(p-j_1-1)$.  So all constellations of length $j$ ultimately become more numerous than 
any constellation of length $j+1$, and comparing the asymptotic populations of constellations
of different lengths is thereby trivial.  

On the other hand, to determine the relative occurrence among constellations of a given length $j_1$, we divide by the factor $(p-j_1-1)$.  
We define the {\em normalized population} of a constellation $s$ of length $j_1$ and driving terms
up to length $J$ as
\begin{equation*}
w_s (\pml{p}) = \left( \prod_{q > j_1+1}^p \frac{1}{q-j_1-1} \right) \cdot n_s(\pml{p}) 
 \; = \; \frac{1}{\phi_{j_1+1}(\pml{p})} \cdot n_s(\pml{p}).
\end{equation*}
We introduce the functions $\phi_i(\pml{p}) = \prod_{q > i}^{p}(q-i).$

{\bf Definition.}  Let $Q=q_1 q_2 \cdots q_m$ be a product of distinct primes, with 
$q_1 < q_2 < \ldots < q_m$.  We define 
\[ \phi_{i}(Q) = \prod_{q_j > i} (q_j - i).\]
Note that $\phi_1 = \phi$, the Euler totient function, over the defined domain - products of distinct primes.  For the primorials we have
\[ \phi_{i}(\pml{p}) = \prod_{q>i}^{p} (q-i). \]

For the normalized populations, the dynamic system becomes
\begin{equation}\label{wsEq}
w_s (\pml{p_k}) =  R \cdot \Lambda_{j_1:J}^k  \cdot L \cdot w_s(\pml{p_0})
\end{equation}
with
\[ \Lambda_{j_1:J}^k = {\rm diag} \left[ 1, \gap \prod_1^k \frac{p_i-j_1-2}{p_i-j_1-1}, \gap  
\prod_1^k \frac{p_i-j_1-3}{p_i-j_1-1}, \ldots, \gap \prod_1^k \frac{p_i-J-1}{p_i-j_1-1} \right].
\]
For these normalized populations, the dominant eigenvalue is now $1$.

For gaps, this normalization corresponds nicely to taking the ratio of the population of the gap under consideration to the population of the gap $g=2$.  For $j_1=2$, we can again interpret this normalization as a ratio, in this case to the populations of constellations $s=24$ or $s=42$. These constellations have no driving terms longer than $j_1$, and $n_{24}(\pml{p}) = n_{42}(\pml{p})$, so there is no ambiguity.  This correlation between the normalization and ratios to constellations consisting of $2$'s and $4$'s begins to break down at $j_1=3$ and collapses completely at $j_1=6$.  For $j_1=3$, we have two constellations to choose from, $s=242$ or $s=424$, and there is ambiguity because $n_{424}(\pml{p}) = 2\cdot n_{242}(\pml{p})$.  For $j_1=4$ we have $s=2424$ and $s=4242$, and for $j_1=5$ we have $s=42424$.  However, by looking at $\pgap(\pml{5})$ we see that there are no constellations consisting only of $2$'s and $4$'s for length $j_1 \ge 6$.  

The normalization does not necessarily provide ratios of the populations of constellations of length $j_1$ to the population of a known constellation.  Instead, it provides relative populations to a (perhaps hypothetical) constellation without driving terms other than the constellation itself
and with population $\phi_{j_1+1}(\pml{p})$.

For example, for $j_1=3$ we have $p_0=5$ and $p_0-j_1-1=1$.  The symmetric
constellation $s=242$ has $w_{242}^\infty = 1$; it has no additional driving terms, it is symmetric,
but it occurs at the middle of the cycle $\pgap(\pml{5})$.  In contrast, the symmetric
constellation $s=424$ has $w_{424}^\infty = 2$, even though it too has no additional driving terms.

\subsection{Relative occurrence in the large.} 
Equation~(\ref{wsEq}) applies under the same conditions as Theorem~\ref{sThm} -- that $s$
is a constellation of length $j_1$ and driving terms up to length $J$, and $p_0$ is a prime
such that $|s| < 2p_1$.  

If we can get a count $n_s(\pml{p_0})$ for $s$ and all of its driving terms in $\pgap(\pml{p_0})$
with $|s| < 2p_1$, then we can apply the eigenstructure to obtain the
asymptotic number of occurrences $w_s^\infty$ of $s$ relative to other constellations
of length $j_1$ as $p_k \fto \infty$:
\begin{eqnarray*}
w_s^\infty & = & \lim_{p_k \fto \infty} w_{s,j_1}(\pml{p_k}) \\
 & = & \lim_{p_k \fto \infty} \frac{n_{s,j_1}(\pml{p_k})}{\phi_{j_1+1}(\pml{p_k})} \\
 & = & L_1 \cdot w_s(\pml{p_0})
\end{eqnarray*}

For this asymptotic result, we can soften the requirement of needing to work with 
$p_0$ such that $|s| < 2p_1$.  For the full dynamic system to apply, this condition
guarantees that for each occurrence $\tilde{s}$ of a driving term for 
$s$ in $\pgap(\pml{p_{k-1}})$, the closures in forming $\pgap(\pml{p_k})$ occur
in distinct copies of $\tilde{s}$.

Since $L_1 = [1, 1, \ldots, 1]$, to calculate $w_s^\infty$ we only need to know
that the $j_1+1$ closures that remove a copy of $\tilde{s}$ from being a driving term
occur in distinct copies.  Other closures may occur inside those copies as well, but we
need to know that under the recursion $p-j_1-1$ copies of $\tilde{s}$ survive as driving
terms for $s$.

\begin{theorem}\label{winfThm}
Let $s$ be a constellation of $j_1$ gaps
\[ s = g_1, \; g_2, \; , \ldots, \; g_{j_1}.
\]
Let $p_0$ be the highest prime that divides any of the intervals 
$g_i + \cdots + g_j$ with $1 \le i \le j \le j_1$.
Then for any $p \ge p_0$,
\[ w_s^\infty = L_1 \cdot w_s(\pml{p}).
\]
\end{theorem}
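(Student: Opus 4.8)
The plan is to reduce the statement to a single combinatorial fact about where closures land, and then to read off the conclusion from the eigenstructure already in hand. First I would record the algebraic observation that, for the normalized system of Equation~(\ref{wsEq}), $L_1 = [1,1,\ldots,1]$ is the left eigenvector of every $\left. M_{j_1:J}\right|_{p}$ for the dominant eigenvalue $1$: indeed $L_1 R = e_1^{\mathrm{T}}$, and $e_1^{\mathrm{T}}\Lambda_{j_1:J} L = e_1^{\mathrm{T}} L = L_1$ since the dominant eigenvalue is $1$, so $L_1 \cdot \left. M_{j_1:J}\right|_{p} = L_1$. Consequently the scalar $L_1 \cdot w_s(\pml{p}) = \sum_{j} w_{s,j}(\pml{p})$ is exactly the coefficient of the dominant mode, and it is conserved by the recursion at every stage at which that recursion is valid. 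The theorem then amounts to showing that this conserved quantity is already correctly propagated from $\pgap(\pml{p})$ as soon as $p \ge p_0$, rather than only under the much stronger hypothesis $|s| < 2p_1$ of Theorem~\ref{sThm}.

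Next I would isolate the one thing the conservation of $L_1 \cdot w_s$ really requires: that each occurrence $\tilde{s}$ of a driving term contributes exactly $p-j_1-1$ driving terms for $s$ to the next stage. A replicated copy survives as a driving term for $s$ precisely when none of its $j_1+1$ boundary closures fires, since interior closures only shorten a copy while keeping it a driving term for $s$. So it suffices to prove that the $j_1+1$ boundary closures of $\tilde{s}$ fall in $j_1+1$ distinct copies. Here I would use that the boundary positions sit at the fixed offsets $\sigma_i = g_1 + \cdots + g_i$, which depend only on $s$ and not on the interior structure or length of $\tilde{s}$, so the argument is uniform over driving terms of every length. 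Two boundary closures, at offsets $\sigma_i$ and $\sigma_\ell$, land in the same copy exactly when the recursion prime $P$ divides $\sigma_\ell - \sigma_i = g_{i+1} + \cdots + g_\ell$, which is one of the interval sums of the constellation. For every stage at or beyond $\pml{p}$ the recursion prime satisfies $P > p \ge p_0$, so $P$ cannot divide any interval sum (that would force $P \le p_0$), and no collision occurs.

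With the boundary closures in distinct copies, exactly $p-j_1-1$ of the $P$ replicas of each $\tilde{s}$ carry no boundary closure and hence remain driving terms for $s$. Summing over all parent driving terms multiplies the total unnormalized population $\sum_j n_{s,j}$ by $P-j_1-1$ at each stage, so after dividing by $\phi_{j_1+1}$ the sum $L_1 \cdot w_s(\pml{p})$ is unchanged from $\pml{p}$ onward. Finally I would identify this conserved value with the limit: the subdominant modes carry normalized eigenvalues $\prod (P-j-1)/(P-j_1-1) \fto 0$ for $j>j_1$, exactly as in the single-gap case, so $w_{s,j}(\pml{p_k}) \fto 0$ for $j > j_1$ while the sum stays fixed, forcing the first component $w_{s,j_1}(\pml{p_k})$ to converge to the conserved value $L_1 \cdot w_s(\pml{p})$.

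I expect the main obstacle to be this last identification rather than the collision count. Relaxing the hypothesis to $p \ge p_0$ means interior closures may now collide with one another or with a boundary closure, so the clean per-length transition matrix $\left. M_{j_1:J}\right|_{p}$ no longer governs the individual populations $w_{s,j}$, and one cannot simply quote Equation~(\ref{wsEq}) stage by stage. The care required is to argue that only the aggregate projection $L_1 \cdot w_s$, the dominant mode, needs to be tracked, and that its conservation together with the vanishing of the subdominant modes pins down $w_s^\infty$ without controlling each $w_{s,j}$ exactly.
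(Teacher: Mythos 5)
Your proposal is correct and follows essentially the same route as the paper's proof: both rest on the observation that for every recursion prime $P$ beyond $\pml{p}$ the $j_1+1$ boundary closures of any driving term land in distinct copies (a collision would force $P$ to divide an interval sum $g_{i+1}+\cdots+g_\ell$, impossible since $P > p \ge p_0$), so the total population multiplies by exactly $P-j_1-1$ and the conserved quantity $L_1 \cdot w_s$ gives the limit. You in fact supply more detail than the paper, which simply asserts the distinct-copies claim, and the residual issue you flag (identifying the conserved sum with $\lim w_{s,j_1}$ when the per-length matrix no longer applies) is glossed over by the paper as well; it can be closed by noting that once $p_K$ satisfies $|s| < 2p_{K+1}$, Theorem~\ref{sThm} applies from $\pml{p_K}$ onward, and conservation between $\pml{p}$ and $\pml{p_K}$ ties the two together.
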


\begin{proof}
Let $\tilde{s}$ be a driving term for $s$.
From $\pgap(\pml{p_0})$ on, the closures that remove copies of $\tilde{s}$ from being
driving terms for $s$ all occur in distinct copies.  Thus for any subsequent $p_k$,
\[ \left| n_s(\pml{p_k}) \right| = (p_k - j_1-1) \left| n_s(\pml{p_{k-1}}) \right|.
\]
We cannot be certain about the lengths of all of the copies that survive as driving terms,
but we do know how the total population grows -- by exactly the factor $p_k-j_1-1$. and
so for all $p_k > p_0$, 
\[ w_s^\infty = L_1 \cdot w_s(\pml{p_k}).
\]
\end{proof}

\section{Specific constellations}
We now apply the population model to a few specific constellations.
Pursuing the twin prime conjecture, we study how constellations that correspond to twin primes
persist in Eratosthenes sieve.  One of these, $s=242$, has been studied extensively before
\cite{NicelyQuads, Rib, JLB}; this constellation corresponds to prime quadruplets, two
pairs of twin primes separated by a gap of $4$.  

The other family of constellations that
we study here are those corresponding to consecutive primes in arithmetic progression.
These constellations consist of the same gap repeated, such as $s=66$ or $s=666$.

\subsection{Constellations related to twin primes.}
Twin primes correspond to a gap $g=2$.  Here we are studying how gaps $g=2$
arise in Eratosthenes sieve, and we do not address how many of these might survive
the sieve to become gaps between primes (in this case between twin primes).

The gap $g=2$ arises in several interesting constellations.  The first, $s=242$,
corresponds to prime quadruplets.  Prime quadruplets are the densest occurrence of 
four primes in the large, two pairs of twin primes separated by a gap of $4$.
The constellation $s=242$ has no additional driving terms, $j_1=J=3$.  
We could use $p_0=3$, for which $n_{242}(\pml{3})=[1]$.  We have
\[ w_{242}^\infty = 1.
\]

\subsubsection{Constellation $s=2,10,2$}
The next constellation we consider is $s=2,10,2$, which corresponds
to two pairs of twin primes separated by
a gap of $10$.  Here $j_1=3$ and $J=4$, for the driving terms $2642$ and $2462$.
Using $p_0=7$, we count $n_{2,10,2}(\pml{7})=[2,6]$ for two occurrences of $s=2,10,2$ and
three occurrences each of the driving terms $2642$ and $2462$.
\begin{eqnarray*}
w_{2,10,2}(\pml{7}) & = & \frac{1}{\phi_4(\pml{7})} \left[ \begin{array}{c} 2 \\ 6 \end{array} \right] 
 \; = \; \left[ \begin{array}{c} 2 / 3 \\ 2 \end{array} \right] \\
 {\rm and} & & \\
 w_{2,10,2}^\infty & = & 8 /3
\end{eqnarray*}
This means that as $p_k \fto \infty$, the number of occurrences of $s=2,10,2$ 
in the sieve approaches 
$8/3$ times the number of occurrences of the constellation $242$.  Remember that these
weights $w^\infty$ are relative only to other constellations of the same length $j_1$.

\subsubsection{Constellation $s=2,10,2,10,2$}
The constellation $s=2,10,2,10,2$ corresponds to three pairs of twin primes with gaps
of $g=10$ separating them.  This constellation also contains two overlapping copies of
$2,10,2$, and two overlapping driving terms for the constellation $12,12$ to which we will
return when we look at arithmetic progressions.

For $s=2,10,2,10,2$, we have $j_1=5$ and $J=7$.  Since $|s|=26$, 
for initial conditions we have to use $\pgap(\pml{13})$.
\begin{eqnarray*}
n_{2,10,2,10,2}(\pml{13}) & = & \left[ \begin{array}{c} 52 \\ 44 \\ 48 \end{array} \right] \\
w_{2,10,2,10,2}(\pml{13}) & = & \frac{1}{\phi_6(\pml{13})}
 \left[ \begin{array}{c} 52 \\ 44 \\ 48 \end{array} \right] \; = \;
 \frac{1}{35}
 \left[ \begin{array}{c} 52 \\ 44 \\ 48 \end{array} \right]  \\
{\rm and} & & \\
w_{2,10,2,10,2}^\infty & = & 144/35.
\end{eqnarray*}
So among constellations of length $5$, the constellation $s=2,10,2,10,2$
occurs with a relative frequency of $144/35$.  For length $j_1=5$, we can use the
constellation $42424$ as a reference.  The population model shows that in the large,
the constellation $s=2,10,2,10,2$ occurs over four times as frequently as the
constellation $42424$.

\subsubsection{Constellation $s=2,10,2,10,2,4,2,10,2,10,2$}  
Along this line of inquiry into constellations that contain several $2$s, we observe that 
the following constellation occurs in $\pgap(\pml{13})$:
\[ s = 2,10,2,10,2,4,2,10,2,10,2.
\]
This is two copies of $2,10,2,10,2$ separated by a gap of $4$.  This corresponds
to six pairs of twin primes, or twelve primes total, occurring in an interval of $|s|=56$.

For this constellation $s$, we have $j_1=11$ and $J=13$.  It would theoretically be possible
for each of the $10$'s to be produced through closures, so there could be driving terms of 
length up to $15$, but when we inspect $\pgap(\pml{13})$, we find that there are two copies of
$s$, ten driving terms of length $12$, twelve driving terms of length $13$, and no driving terms
of length $14$ or $15$.

Since $|s|=56$, to use Theorem~\ref{sThm} we would need to use $p_0=23$ to employ the 
full dynamic system.  However, to obtain the asymptotic results of Theorem~\ref{winfThm}
we can use $p_0=13$.  We calculate $w_s^\infty = 24$.  This means that relative to a
constellation of length $11$ with one occurrence in $\pgap(\pml{13})$ and no 
additional driving terms, as the sieve 
continues the constellation $s$ will occur approximately $24$ times as often.

\begin{table}
\begin{tabular}{c|rrr|rrr} \hline
$s$ & $|s|$ & $j_1$ & $J$ & $p_0$ & $n(\pml{p_0})$ & $\omega^\infty$ \\ \hline
$242$ & $8$ & $3$ & $3$ & $5$ & $[1]$ & $1$ \\
$424$ & $10$ & $3$ & $3$ & $5$ & $[2]$ & $2$ \\
$2,10,2$ & $14$ & $3$ & $4$ & $7$ & $[2,6]$ & $8/3$ \\
$42424$ & $16$ & $5$ & $5$ & $7$ & $[1]$ & $1$ \\
$2,10,2,10,2$ & $26$ & $5$ & $7$ & $13$ & $[52, 44, 48]$ & $144/35$ \\ 
$2,10,2,10,2,4,2,10,2,10,2$ & $56$ & $11$ & $13$ & $13$ & $[2, 10, 12]$ & $24$ \\
$66$ & $12$ & $2$ & $4$ & $5$ & $[0, 2, 2]$ & $2$ \\
$12,12$ & $24$ & $2$ & $6$ & $11$ & $[0, 2, 20, 48, 58]$ & $2$ \\
$666$ & $18$ & $3$ & $5$ & $7$ & $[0, 4, 2 ]$ & $2$ \\ \hline
\end{tabular}
\caption{Table of initial conditions and parameters for a few representative constellations.
The population of a constellation of length $j_1$ grows primarily by a factor of
$p-j_1-1$.}
\end{table}

\subsection{Consecutive primes in arithmetic progression}
A sequence of $j_1+1$ consecutive primes in arithmetic progression corresponds 
to a constellation of $j_1$ identical gaps $g$.
By considering residues, we easily see that for a sequence of $j_1+1$ primes in arithmetic 
progression, $g$ must be divisible by every prime $p \le j_1+1$.  
So for three consecutive primes in arithmetic progression, the minimal constellation is $s=66$.  
For four consecutive primes in arithmetic progression, the minimal constellation is $s=666$, and
then for an arithmetic progression of five consecutive primes the minimal constellation is 
$s=30,30,30,30$.  

\subsubsection{Constellation $s=66$.}
Let's now calculate the population of the constellation $s=66$.  This constellation 
corresponds to three consecutive primes in arithmetic progression: $p, \; p+6, \; p+12$.
Since $|s|=12$, we can still use $p_0 = 5$.  In $\pgap(\pml{5})=64242462$, we observe the
following initial conditions for $s=66$:
$$ n_{66}(\pml{5}) = \left[ \begin{array}{c}
 0 \\ 2 \\ 2 \end{array}\right].$$
For the first entry, we don't
yet have any occurrences of $s=66$.  We do have two driving terms of length three: $642$
and $246$; and two driving terms of length four: $4242$ and $2424$.

\begin{eqnarray*}
w_{66}(\pml{5}) & = & \frac{1}{\phi_3(\pml{5})}
 \left[ \begin{array}{c} 0 \\ 2 \\ 2 \end{array} \right] \\
{\rm and} & & \\
w_{66}^\infty & = & 2.
\end{eqnarray*}

\subsubsection{Constellation $s=12,12$.}
The constellation $s=12,12$ also
corresponds to three consecutive primes in arithmetic progression: $p, \; p+12, \; p+24$.
Since $|s|=24$, to apply the full dynamic system of Theorem~\ref{sThm} we use $p_0 = 11$.  
In $\pgap(\pml{11})$, we calculate the following, for driving terms from length $j_1=2$ to $J=6$:
\[ \begin{array}{lcl} n_{12,12}(\pml{11}) = \left[ \begin{array}{c}
0 \\ 2 \\ 20 \\ 48 \\ 58 \end{array}\right] &{\rm and} &
w_{12,12}(\pml{11}) =  \frac{1}{\phi_3(\pml{11})}
 \left[ \begin{array}{c} 0 \\ 2 \\ 20 \\ 48 \\ 58 \end{array} \right] 
 \end{array} 
 \]
 So for $s=12,12$ we have
\[
w_{12,12}^\infty = \frac{128}{8\cdot 4 \cdot 2} = 2.
\]
It is interesting that the asymptotic relative population of $s=12,12$ is the same as for the
constellation $s=66$.

\subsubsection{Constellation $s=666$.}
The constellation $s=666$ is the smallest constellation
corresponding to four consecutive primes in arithmetic progression.
Since $|s|=18$, we can use $p_0 = 7$.  In $\pgap(\pml{7})$, we have the
following initial conditions for $s=666$, for driving terms from length $j_1=3$ to $J=5$:
\[ \begin{array}{lcl} n_{666}(\pml{7}) = \left[ \begin{array}{c}
0 \\ 4 \\ 2 \end{array}\right] &{\rm and} &
w_{666}(\pml{7}) =  \frac{1}{\phi_4(\pml{7})}
 \left[ \begin{array}{c} 0 \\ 4 \\ 2  \end{array} \right] 
 \end{array} 
 \]
For $s=666$ we have
\[
w_{666}^\infty = \frac{6}{3} = 2.
\]
It is interesting that we again have the asymptotic relative population of $w_s^\infty=2$, 
although here it is relative to constellations of length $j_1=3$.

\section{Polignac result for arithmetic progressions}
Here we establish a strong Polignac result on arithmetic progressions.
In \cite{HRErat} we established the following analogue of the Polignac conjecture:

\begin{theorem}
Let $g=2n$ and let $Q= q_1 \cdots q_m$ be the product of the distinct prime factors
of $g$.  Then $g$ arises as a gap in Eratosthenes sieve and asymptotic ratio
\[ w_g^\infty = \prod_{2 < q_i} \frac{q_i - 1}{q_i - 2}.
\]
\end{theorem}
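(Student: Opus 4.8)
The plan is to run the single-gap dynamic system of Equations~(\ref{ngEq})--(\ref{wgEq}) and read off $w_g^\infty$ from its eigenstructure, exactly as in the gap specialization ($j_1=1$) of Theorem~\ref{winfThm}. First I would take $p_0=q_m$, the largest prime factor of $g$: the only sub-interval of the length-one ``constellation'' $g$ is $g$ itself, so $q_m$ is the highest prime dividing any sub-interval, and the hypothesis of the gap version of Theorem~\ref{winfThm} holds for every $p\ge q_m$. This collapses the claim to the single identity
\[
w_g^\infty \;=\; L_1\cdot w_g(\pml{q_m}) \;=\; \frac{1}{\phi_2(\pml{q_m})}\sum_{j\ge 1} n_{g,j}(\pml{q_m}),
\]
turning the problem into one of counting all driving terms for $g$, of every length, in the single cycle $\pgap(\pml{q_m})$.

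Next I would identify that total count combinatorially: I claim $\sum_j n_{g,j}(\pml{q_m})$ equals the number of residues $a\in\Z\bmod\pml{q_m}$ for which both $a$ and $a+g$ are generators. Each such $a$ determines the run of consecutive gaps from $a$ to $a+g$; this run sums to $g$ and is a legitimate driving term, since every interior generator is coprime to $\pml{q_m}$ and so can be removed by a later prime while $a$ and $a+g$ survive, and conversely every occurrence of a driving term for $g$ arises this way. Counting these pairs prime-by-prime over $q\le q_m$ by CRT gives a factor $1$ at $q=2$ (here $a$ odd already forces $a+g$ odd), a factor $q-1$ at each odd $q\mid g$ (since $a+g\equiv a$, only $a\not\equiv 0$ is needed), and a factor $q-2$ at each odd $q\nmid g$ (the forbidden residues $0$ and $-g$ are distinct), so that
\[
\sum_j n_{g,j}(\pml{q_m}) \;=\; \prod_{\substack{2<q\le q_m\\ q\nmid g}}(q-2)\;\cdot\;\prod_{\substack{q\mid g\\ q>2}}(q-1).
\]

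Dividing by $\phi_2(\pml{q_m})=\prod_{2<q\le q_m}(q-2)$ then cancels every factor coming from a prime not dividing $g$ and promotes each remaining $(q-2)$ to $(q-1)$, which yields
\[
w_g^\infty \;=\; \prod_{\substack{q\mid g\\ q>2}}\frac{q-1}{q-2},
\]
the asserted product over the distinct prime factors $q_i>2$ of $g$. For the existence half I would observe that this value is at least $1$, hence positive; combined with the eigenvalue decay of the normalized system---the factors $\prod_1^k\frac{p_i-j-1}{p_i-2}$ tend to $0$ for each length $j\ge 2$ while the length-one eigenvalue is $1$---the normalized population concentrates at length one, so $w_{g,1}(\pml{p})\fto w_g^\infty>0$ and therefore $n_{g,1}(\pml{p})>0$ for all large $p$; that is, $g$ genuinely arises as a gap.

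I expect the main obstacle to be justifying the reduction $w_g^\infty=L_1\cdot w_g(\pml{q_m})$ under the weak hypothesis $p_0=q_m$ instead of the strong hypothesis $g<2p_1$ behind Theorem~\ref{sThm}. The clean growth law $|n_g(\pml{p_k})|=(p_k-2)\,|n_g(\pml{p_{k-1}})|$ needs the two boundary closures---the removals of $a$ and of $a+g$---to land in distinct copies at every stage $p_k>q_m$, which holds because for such $p_k$ we have $p_k\nmid g$, so $a\equiv 0$ and $a+g\equiv 0\pmod{p_k}$ cannot occur simultaneously. The subtler companion point, and the one I would treat most carefully, is that under this weak hypothesis the normalized mass still drains entirely into length one, so that the length-one limit equals the conserved total; unlike the strong-hypothesis case of Theorem~\ref{sThm}, the full matrix eigenstructure is not directly available, and one must argue the decay of the length-$\ge 2$ populations from the fact that interior closures only ever shorten a driving term.
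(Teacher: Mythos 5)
Your proposal is correct, and it reaches the key count by a genuinely different route than the paper. (The statement itself is only cited here from \cite{HRErat}; the in-paper argument to compare against is the proof of Theorem~\ref{PolsThm}, which contains this statement as the case $j_1=1$, every length-one repetition being feasible and $\phi_1(Q)/\phi_2(Q)$ being exactly the stated product.) Both you and the paper reduce the problem, via Theorem~\ref{winfThm} with $p_0=q_m$, to evaluating the total $L_1\cdot n_g(\pml{q_m})$; the difference is how that total is obtained. The paper computes it constructively inside its sieve framework: it starts in the non-primorial cycle $\pgap(Q)$, where the $\phi(Q)$ rotations of the concatenation of $g/Q$ copies of the cycle are precisely the driving terms, and then runs the recursion through intermediate cycles $\pgap(Q_i)$, one missing prime $\rho_i$ at a time, each step multiplying the total by $(\rho_i-2)$ since $\rho_i\nmid g$ forces the boundary closures into distinct copies. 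You instead count at $\pml{q_m}$ in one shot through the bijection between driving terms and residues $a$ with $a$ and $a+g$ both coprime to $\pml{q_m}$, then factor by CRT; this yields the same product $\prod_{q\mid g,\,q>2}(q-1)\cdot\prod_{2<q\le q_m,\,q\nmid g}(q-2)$. Your count is more elementary and makes the final product structure transparent, and it avoids defining cycles of gaps for non-primorial moduli; the paper's recursive construction is what generalizes naturally to the repetitions $s=g,\ldots,g$ of Theorem~\ref{PolsThm}, where the feasibility condition emerges from the same distinct-copies requirement. The one delicate point, which you correctly flagged, is that Theorem~\ref{winfThm} conserves only the total normalized population, so concluding that the length-one component converges to that total (needed both for the value of $w_g^\infty$ and for the existence claim $n_{g,1}>0$) requires the concentration argument; this is supplied by invoking Theorem~\ref{sThm} from any $p_k$ with $g<2p_{k+1}$ onward, where the subdominant eigenvalue ratios tend to zero --- the same mechanism the paper itself relies on implicitly.
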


For a gap $g=2n$, we can now show that not only does the gap occur in Eratosthenes sieve,
but that every feasible repetition of $g$ as a constellation $s=g,\ldots , g$ occurs in
the sieve.  We make this precise below. 

Above, we calculated the occurrences for a few small examples.  We cannot
perform the brute force calculation for five primes in arithmetic progression.  The minimal
constellation we would need to consider is
$s=30,30,30,30$.  To apply Theorem~\ref{sThm} we would have to use $p_0=57$.  However,
we could obtain asymptotic results via Theorem~\ref{winfThm} with $p_0=5$.
The steps we would take in order to apply Theorem~\ref{winfThm} to this constellation
can be generalized to prove Theorem~\ref{PolsThm} below.

{\bf Definition.}  Let $g$ be a gap and let $p_k$ be the
largest prime such that $\pml{p_k} | g$.  Let $s= g,\ldots,g$ be a repetition of $g$ of length $j_1$.
Then the constellation $s$ is {\em feasible} iff $j_1 < p_{k+1}-1$.

If it survives subsequent stages of the sieve, a repetition $s$ of length $j_1$ corresponds to $j_1+1$ consecutive primes in arithmetic progression.

\begin{theorem}\label{PolsThm}
Let $g$ be an even number, 
and let $Q= q_1 \cdots q_m$ be the product of the distinct prime factors of $g$, with
$q_1 < \ldots < q_m$.  Let $s$ be a feasible repetition of $g$ of length $j_1$.
Then $s$ occurs in Eratosthenes sieve with asymptotic weight
\[ w_{g,\ldots,g}^\infty = \frac{\phi_1(Q)}{\phi_{j_1+1}(Q)}.
\]
\end{theorem}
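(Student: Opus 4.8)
The plan is to bypass the combinatorial recursion almost entirely and reduce the claim to a single residue count, leaning on Theorem~\ref{winfThm}. First I would pin down the constant $p_0$ of that theorem. For $s=g,\ldots,g$ the intervals $g_i+\cdots+g_j$ are exactly the multiples $mg$ with $1\le m\le j_1$; since $\pml{p_k}\mid g$ forces every prime $\le p_k$ to divide $g$, and feasibility gives $j_1<p_{k+1}-1$, every prime $\le j_1$ is $\le p_k\le q_m$. Hence the highest prime dividing any interval is exactly $q_m$, the largest prime factor of $g$, so $p_0=q_m$. Theorem~\ref{winfThm} then yields, for every $p\ge q_m$,
\[ w_s^\infty = L_1 \cdot w_s(\pml{p}) = \frac{1}{\phi_{j_1+1}(\pml{p})}\sum_{j\ge j_1} n_{s,j}(\pml{p}) = \frac{N_s(\pml{p})}{\phi_{j_1+1}(\pml{p})}, \]
where $N_s(\pml{p})$ is the \emph{total} number of driving terms for $s$ of any length. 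So everything reduces to a closed form for $N_s(\pml{p})$.

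The key step is a bijection between driving terms and starting points of an arithmetic progression of generators. A driving term $\tilde{s}=\tilde{s_1}\cdots\tilde{s_{j_1}}$ with $|\tilde{s_i}|=g$ is the run of consecutive gaps between two generators $a$ and $a+j_1g$ whose $j_1-1$ internal block boundaries $a+g,\ldots,a+(j_1-1)g$ are also generators; conversely a generator $a$ for which $a,a+g,\ldots,a+j_1g$ are all coprime to $\pml{p}$ determines a unique such run (take all the intervening gaps, regardless of how many generators sit strictly inside the blocks). Thus $N_s(\pml{p})$ equals the number of residues $a$ mod $\pml{p}$ with $a+ig$ coprime to $\pml{p}$ for every $0\le i\le j_1$.

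I would then evaluate this count by CRT, one prime $q\le p$ at a time. If $q\mid g$, all markers $a+ig$ collapse to $a \bmod q$, so the sole constraint is $a\not\equiv 0$, giving $q-1$ choices. If $q\nmid g$, then $g$ is invertible mod $q$ and the markers form a length-$(j_1+1)$ arithmetic progression; here feasibility does the real work, because every prime not dividing $g$ is $\ge p_{k+1}>j_1+1$, so the $j_1+1$ forbidden residues $-ig$ are distinct and do not exhaust $\Z/q\Z$, leaving $q-j_1-1\ge 1$ choices. (This is exactly the point where an infeasible $j_1$ would force some marker $\equiv 0$ and annihilate the population.) Multiplying over all $q\le p$ with $p\ge q_m$ gives $N_s(\pml{p}) = \phi_1(Q)\prod_{q\le p,\; q\nmid g}(q-j_1-1)$.

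Finally I would form the ratio. Since $\phi_{j_1+1}(\pml{p})=\prod_{j_1+1<q\le p}(q-j_1-1)$ and every prime $q\nmid g$ already satisfies $q>j_1+1$, the numerator product runs over a subset of the denominator's primes, and the factors that survive in the denominator are precisely $\prod_{q\mid g,\; q>j_1+1}(q-j_1-1)=\phi_{j_1+1}(Q)$. This gives $w_s^\infty=\phi_1(Q)/\phi_{j_1+1}(Q)$, and positivity of every factor shows $N_s(\pml{p})>0$, so $s$ genuinely occurs. I expect the main obstacle to be the bijection-and-counting step: one must argue carefully that summing $n_{s,j}$ over all lengths collapses to the arithmetic-progression count via $L_1$, and that feasibility is exactly the hypothesis guaranteeing distinct, non-vanishing marker residues for the primes $q\nmid g$.
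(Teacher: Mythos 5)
Your proposal is correct, but at the decisive step it takes a genuinely different route from the paper. Both proofs reduce to the same two ingredients: (a) the observation that for $s=g,\ldots,g$ all the intervals $g_i+\cdots+g_j$ are multiples $mg$ with $m\le j_1$, whose prime factors are (by feasibility) exactly the prime factors of $g$, so that Theorem~\ref{winfThm} applies with $p_0=q_m$; and (b) a computation of the total driving-term count $L_1\cdot n_s$. For (b) the paper never counts residues: it seeds the construction inside the non-primorial cycle $\pgap(Q)$, noting that concatenating that cycle $j_1 g/Q$ times from each of its $\phi(Q)$ starting gaps yields exactly $\phi(Q)$ driving terms for $s$, and then re-introduces the missing primes $\rho_1<\cdots<\rho_k<q_m$ not dividing $Q$ one recursion step at a time, arguing (via the same feasibility-forces-distinct-boundary-closures observation you make for $q\nmid g$) that each step multiplies the total population by $\rho_i-j_1-1$, arriving at $L_1\cdot n_s(\pml{q_m})=\phi(Q)\prod_i(\rho_i-j_1-1)$ before applying the identity $w_s^\infty=L_1\cdot w_s(\pml{q_m})$ exactly as you do. Your CRT count of admissible starting residues $a$ (those with $a+ig$ coprime to $\pml{p}$ for $0\le i\le j_1$) replaces both the concatenation seed and the inductive passage from $Q$ up to $\pml{q_m}$, giving the closed form $N_s(\pml{p})=\phi_1(Q)\prod_{q\le p,\,q\nmid g}(q-j_1-1)$ in one stroke for every $p\ge q_m$; the two counts agree at $p=q_m$. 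What your version buys: it avoids extending the sieve recursion to non-primorial squarefree moduli $Q_i$ (a mild generalization the paper uses without comment), it makes the role of feasibility completely transparent (distinctness and non-vanishing of the $j_1+1$ forbidden residues modulo each $q\nmid g$), and it is easy to verify numerically (for $s=66$, $p=5$ it gives $1\cdot 2\cdot 2=4$, matching the paper's $n_{66}(\pml{5})=[0,2,2]$). What the paper's version buys: it stays entirely inside the gap-recursion formalism of steps R2/R3 and closures, so it needs no translation between driving terms and residue classes --- which is the one point in your argument requiring care, namely that each admissible residue $a$ determines exactly one driving term (of some length between $j_1$ and $J$) and conversely, so that $\sum_j n_{s,j}(\pml{p})$ really equals your residue count; you flag this correctly, and the identification is sound.
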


\begin{proof}
We start in the cycle of gaps $\pgap(Q)$, in which $Q$ may not be a primorial. 
$\pgap(Q)$ consists of $\phi(Q)$ gaps that sum to $Q$.  So we can start with
any of the $\phi(Q)$ gaps and continue through the cycle $g/Q$ times, and
this concatenation is a driving term for $g$.  

We repeat this concatenation of cycles $\pgap(Q)$, to identify the driving terms
for $s=g,\ldots,g$, a feasible repetition of the gap $g$ of length $j_1$. 
Starting with any gap in $\pgap(Q)$, we continue through
$\pgap(Q)$ for $j_1 \cdot g / Q$ complete cycles, and this is
a driving term $\tilde{s}$ for $s$.
Since we can start from any gap in $\pgap(Q)$, we have $\phi(Q)$ driving terms for $s$.

Now that we have seeded the construction, we move from
$\pgap(Q)$ back into the cycles of gaps for the primorials, $\pgap(\pml{q_m})$.   

{\em Case 1. $Q$ is itself a primorial.}  In this case $Q=\pml{q_m}$, and the total number of driving 
terms for $s$ equals $L_1 \cdot n_s(\pml{q_m}) = \phi(Q)$.  From this, we calculate
the asymptotic ratio
\[ w_s^\infty = \frac{\phi(\pml{q_m})}{\phi_{j_1+1}(\pml{q_m})} = \frac{\phi_1(Q)}{\phi_{j_1+1}(Q)}.
\]

{\em Case 2. $Q$ is not a primorial.}  Let $\rho_1 < \rho_2 < \cdots < \rho_k$ be the
primes less than $q_m$ that are not factors of $Q$.  Let $Q_0=Q$.  For $i=1,\ldots,k$,
let $Q_i = Q_{i-1} \cdot \rho_i.$  

Let $\tilde{s}$ be a driving term for $s$ in $\pgap(Q_{i-1})$.  Under the recursion to create
$\pgap(Q_i)$, we create $\rho_i$ copies of $\tilde{s}$ in step R2.  Then in step R3 we close
gaps as indicated by the element wise product $\rho_i * \pgap(Q_{i-1})$.  

The driving term $\tilde{s}$ is composed of $j_1$ driving terms for $g$.
\[ \tilde{s} = \tilde{s_1} \; \tilde{s_2} \; \ldots \; \tilde{s_{j_1}}
\]
in which each $|\tilde{s_i}| = g$.  

The $j_1+1$ boundary closures will eliminate copies of $\tilde{s}$
from being a driving term for $s$.
All of the intervals in Theorem~\ref{winfThm} are multiples of
$g$, and these multiples themselves have prime factors entirely divisible by the factors of $g$.
That is, if $\pml{p}$ is the largest primorial that divides $g$, then for all $2 \le j \le j_1$, 
by the feasibility of $s$, the prime factors of $j$ are factors of $\pml{p}$.

Since $\rho_i \not| g$, all of the boundary closures for $\tilde{s}$ occur in different copies
of $\tilde{s}$.  Thus, of the $\rho_i$ initial copies of $\tilde{s}$, $j_1+1$ are removed as
driving terms for $s$, and the other $\rho_i-j_1-1$ copies remain as driving terms of
some length but no longer than the length of $\tilde{s}$.  

We don't know the lengths of the driving terms for $s$ in $\pgap(Q_i)$, but we do know the
total population:
\begin{eqnarray*}
L_1 \cdot n_s(Q_i) & = & (\rho_i - j_1 - 1) \cdot L_1 \cdot n_s(Q_{i-1}) \\
 & = & (\rho_i-j_1-1)(\rho_{i-1}-j_1-1)\cdots(\rho_1-j_1-1) \cdot L_1 \cdot n_s(Q_0) \\
 & = & (\rho_i-j_1-1)(\rho_{i-1}-j_1-1)\cdots(\rho_1-j_1-1) \cdot \phi(Q).
\end{eqnarray*}
Continuing this construction, we have $Q_k=\pml{q_m}$, from which
\[ L_1 \cdot n_s(Q_k) = (\rho_k-j_1-1)(\rho_{k-1}-j_1-1)\cdots(\rho_1-j_1-1) \cdot \phi(Q),
\]
and the asymptotic ratio is
\begin{equation*}
 w_s^\infty = \frac{1}{\phi_{j_1+1}(\pml{q_m})} \cdot L_1 \cdot n_s(\pml{q_m}) \; = \;
 \frac{\phi(Q)}{\phi_{j_1+1}(Q)}.
\end{equation*}
\end{proof}

Thus every feasible repetition of a gap $g$ occurs in Eratosthenes sieve.  Compared
to other constellations of length $j_1$, the repetition of $g$ of length $j_1$ has asymptotic
weight $\phi_1(Q) / \phi_{j_1+1}(Q)$.  To the extent that these constellations survive
the sieve, they correspond to $j_1+1$ consecutive primes in arithmetic progression.

\section{Conclusion}
We have adapted the dynamic model for populations of gaps, as developed
in \cite{HRErat}, to apply to populations of constellations.  These models for 
constellations all have the same left and right eigenvectors.  The matrix of right eigenvectors
$R$ is the upper triangular Pascal matrix with alternating signs, and the 
matrix of left eigenvectors $L$ is the upper triangular Pascal matrix.
The eigenvalues depend on the prime and on the length of the constellation, but
these otherwise have the same form.

We have calculated the populations for certain specific constellations containing
the gap $2$.  These correspond to pairs of twin primes in various constellations.

We also calculated the populations for a few constellations that are the repetition
of a gap $g$.  These repetitions correspond to consecutive primes in arithmetic 
progression.  We are able to establish a strong Polignac result for arithmetic progressions:
for any $g=2n$, every feasible repetition of $g$ occurs as a constellation in 
Eratosthenes sieve.  The asymptotic ratio for the occurrence of the repetition of
$g$ of length $j_1$ depends only on the prime factors of $g$ and the length $j_1$.


\bibliographystyle{amsplain}

\providecommand{\bysame}{\leavevmode\hbox to3em{\hrulefill}\thinspace}
\providecommand{\MR}{\relax\ifhmode\unskip\space\fi MR }
\providecommand{\MRhref}[2]{%
  \href{http://www.ams.org/mathscinet-getitem?mr=#1}{#2}
}
\providecommand{\href}[2]{#2}

\end{document}